\theoremstyle{plain}
\newtheorem{theorem}{Theorem}
\newtheorem{lemma}[theorem]{Lemma}
\begin{document}

\title[Tyurin's babylonian tower theorem]{A simple proof of Tyurin's 
babylonian tower theorem}

\author[I. Coand\u{a}]{Iustin Coand\u{a}}
\address{Institute of Mathematics of the Romanian Academy, 
         P. O. Box 1-764, RO-014700, Bucharest, Romania}
\email{Iustin.Coanda@imar.ro}

\subjclass[2000]{Primary: 14F05; Secondary: 14J60; 14B10}

\keywords{Vector bundle; Projective scheme; Babylonian tower}

\begin{abstract}
Using the method of Coand\u{a} and Trautmann (2006), we give a simple proof 
of the following theorem due to Tyurin (1976) in the smooth case: if a 
vector bundle $E$ on a $c$-codimensional locally Cohen-Macaulay closed 
subscheme $X$ of the projective space ${\mathbb P}^n$ extends to a vector 
bundle $F$ on a similar closed subscheme $Y$ of ${\mathbb P}^N$, for every 
$N > n$, then $E$ is the restriction to $X$ of a direct sum of line bundles 
on ${\mathbb P}^n$. Using the same method, we also provide a proof of the 
Babylonian tower theorem for locally complete intersection subschemes 
of projective spaces.     
\end{abstract}

\maketitle

Let ${\mathbb P}^n$ be the projective $n$-space over an algebraically closed 
field $k$ of arbitrary characteristic and $S = k[X_1,\ldots ,X_n]$ its 
projective coordinate ring. For $m > 0$, embed ${\mathbb P}^n$ into the 
projective $(n+m)$-space ${\mathbb P}^{n+m}$ with coordinate ring 
$R = k[X_1,\ldots ,X_{n+m}]$ as the linear subspace $L$ of equations 
$X_{n+1} = \cdots = X_{n+m} = 0$. One says that a coherent sheaf $\mathcal F$ 
on ${\mathbb P}^n$ {\it extends} to a coherent sheaf $\mathcal G$ on 
${\mathbb P}^{n+m}$ if ${\mathcal G}\, \vert \, {\mathbb P}^n \simeq 
\mathcal F$ and ${\mathcal Tor}_i^{{\mathcal O}_{{\mathbb P}^{n+m}}}({\mathcal G},
{\mathcal O}_L) = 0$, $\forall i > 0$. Since $L$ is defined locally in 
${\mathbb P}^{n+m}$ by a regular sequence, the later condition is equivalent 
to ${\mathcal Tor}_1^{{\mathcal O}_{{\mathbb P}^{n+m}}}({\mathcal G},{\mathcal O}_L) 
= 0$ (see Matsumura (1986, Thm. 16.5)). A closed subscheme $X$ of 
${\mathbb P}^n$ {\it extends} to a closed subscheme $Y$ of 
${\mathbb P}^{n+m}$ if the structure sheaf ${\mathcal O}_X$ extends to 
${\mathcal O}_Y$. It is easy to see that in this case the ideal sheaf 
${\mathcal I}_{X,{\mathbb P}^n}$ extends to ${\mathcal I}_{Y,{\mathbb P}^{n+m}}$. 

In this note, using the method of Coand\u{a} and Trautmann (2006), we shall 
provide simple, elementary proofs of the following two results: 

\begin{theorem}
Let $E$ be a vector bundle (= locally free sheaf) on a locally 
Cohen-Macaulay closed subscheme $X$ of ${\mathbb P}^n$, of pure 
codimension $c$. If, for every $m > 0$, $E$ extends to a vector bundle $F$ 
on a locally Cohen-Macaulay closed subscheme $Y$ of ${\mathbb P}^{n+m}$, of 
pure codimension $c$, then $E$ is isomorphic to a direct sum of line bundles 
of the form ${\mathcal O}_X(a)$, $a \in {\mathbb Z}$. 
\end {theorem}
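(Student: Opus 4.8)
\emph{Step 1 (a splitting criterion).} The plan is to reformulate the conclusion as a Horrocks-type homological condition on ${\mathbb P}^n$. Let $j\colon X \hookrightarrow {\mathbb P}^n$ be the inclusion. Because $X$ is locally Cohen-Macaulay of pure codimension $c$ and $E$ is locally free on it, the sheaf $j_\ast E$ has homological dimension $c$ over ${\mathcal O}_{{\mathbb P}^n}$, and $E \simeq \bigoplus_i {\mathcal O}_X(a_i)$ if and only if the minimal free resolution of $j_\ast E$ on ${\mathbb P}^n$ equals $\bigoplus_i \big(\,\text{minimal free resolution of }{\mathcal O}_X\,\big)(a_i)$; when $X$ is arithmetically Cohen-Macaulay this amounts to saying that the intermediate cohomology $H^p(X,E(j))$ vanishes for $0 < p < \dim X$ and all $j$ \emph{and} that $\bigoplus_j H^0(X,E(j))$ is free over the homogeneous coordinate ring of $X$. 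The strategy is to isolate these homological conditions, and then verify them from the extension hypothesis.

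\emph{Step 2 (what one extension buys).} Realise ${\mathbb P}^n$ as the hyperplane $L=\{\ell=0\}$ of ${\mathbb P}^{n+1}$. That $E$ extends to a vector bundle $F$ on $Y\subset{\mathbb P}^{n+1}$ means that $Y\cap L=X$ with $\ell$ a nonzerodivisor on ${\mathcal O}_Y$ (this is the ${\mathcal Tor}_1$-vanishing recalled above) and that $F\,\vert\,X\simeq E$; hence, tensoring $0\to{\mathcal O}_Y(-1)\xrightarrow{\ell}{\mathcal O}_Y\to{\mathcal O}_X\to0$ with the locally free sheaf $F$ yields short exact sequences $0\to F(j-1)\xrightarrow{\ell}F(j)\to E(j)\to0$ for every $j$. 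The resulting long exact cohomology sequences express the graded modules $\bigoplus_j H^p(X,E(j))$ through the graded cohomology modules of $F$ over the homogeneous coordinate ring of $Y$, with multiplication by $\ell$ as the connecting map; graded Nakayama then turns surjectivity of $\cdot\,\ell$ into vanishing and structure statements. Iterating along the tower --- and, whenever convenient, replacing the $Y$'s by their generic linear sections, which stay locally Cohen-Macaulay of pure codimension $c$ and stay ${\mathcal Tor}$-independent because the cutting linear forms still form a regular sequence --- is, in essence, the method of Coand\u{a} and Trautmann adapted to the present setting.

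\emph{Step 3 (induction on $\dim X$).} Cut $X$ with a general hyperplane $H$. Since $X$ is pure-dimensional with no embedded components, $X'=X\cap H$ is again locally Cohen-Macaulay of pure codimension $c$, now in $H={\mathbb P}^{n-1}$; and for every $m$ the bundle $E'=E\,\vert\,X'$ extends to $F\,\vert\,(Y\cap\widetilde H)$ on the locally Cohen-Macaulay subscheme $Y\cap\widetilde H$ of ${\mathbb P}^{n-1+m}$, for a sufficiently general hyperplane $\widetilde H\supset H$ of ${\mathbb P}^{n+m}$ (the ${\mathcal Tor}$-independence again survives, as the relevant linear forms stay regular). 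By the inductive hypothesis $E'\simeq\bigoplus_i{\mathcal O}_{X'}(a_i)$. Feeding this, together with the sequence $0\to E(j-1)\to E(j)\to E'(j)\to0$, the output of Step~2, and Serre vanishing into the cohomology modules of $E$, one lifts the splitting type from $X'$ back to $X$.

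\emph{The main obstacle.} The base of the induction is the curve case $\dim X=1$, which admits no further hyperplane reduction and must be handled directly from the whole tower. More importantly, a \emph{single} extension never suffices: vanishing of the intermediate cohomology does not by itself force a splitting --- already on the smooth quadric surface ${\mathbb P}^1\times{\mathbb P}^1$ the bundle ${\mathcal O}(1,0)\oplus{\mathcal O}(-1,0)$ has all intermediate cohomology zero yet needs four generators over the coordinate ring and so is not a sum of the ${\mathcal O}_X(a)$'s --- so the freeness (respectively the Betti-number) half of the criterion of Step~1 genuinely uses \emph{all} the subschemes $Y\subset{\mathbb P}^{n+m}$. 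I expect the crux to be exactly this: extracting from the infinite tower --- via generic linear sections and the $\ell$-operator of Step~2 --- enough compatibility to conclude that the relevant syzygy module of $E$ is actually free, not merely maximal Cohen-Macaulay; the top-degree cohomology, where graded Nakayama must be run in the opposite direction, is the technically most delicate point.
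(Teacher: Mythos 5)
Your outline has a genuine gap, and you have in fact located it yourself: nothing in Steps 1--3 ever establishes the generation/freeness half of your splitting criterion, and that half is the whole content of the theorem. Step 2 (the $\ell$-multiplication sequences plus graded Nakayama) can at best yield vanishing statements for the intermediate cohomology modules $\text{H}^i_{\ast}(E)$, and your own example on the quadric shows that such vanishing does not force $E$ to be a direct sum of the sheaves ${\mathcal O}_X(a)$. Step 3 does not repair this: lifting a splitting from the general hyperplane section $X'$ back to $X$ requires precisely the control on the minimal generators of $\text{H}^0_{\ast}(E)$ over the coordinate ring of $X$ that is missing (it is not a formal consequence of the splitting of $E\vert X'$, as your quadric example again illustrates after restriction), and the base case $\dim X = 1$ is left untouched. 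So what you have written is a plan whose decisive step is labelled ``the crux'' but not carried out; as it stands it is not a proof.

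For comparison, the paper closes exactly this gap, and by a mechanism different from hyperplane-section induction on $X$. Lemmas 4 and 5 produce, for every $m$ larger than a bound depending only on $E$ (namely $\sum_{i>j}\mu_i\,\text{h}^1(E(j))$), an $(n+1)$-dimensional linear subspace $P \supset {\mathbb P}^n$ of ${\mathbb P}^{n+m}$ such that $\text{H}^0_{\ast}({\mathcal G}_P) \rightarrow \text{H}^0_{\ast}(E)$ is surjective; this forces the minimal graded free resolution of $\text{H}^0_{\ast}(F)$ over $R$ to restrict to the minimal resolution of $\text{H}^0_{\ast}(E)$ over $S$, so the first Betti numbers $r_0, r_1$ of $\text{H}^0_{\ast}(F)$ are those of $\text{H}^0_{\ast}(E)$, independent of $m$. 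One then plays these fixed numbers against the unbounded dimension of $Y$: along $Y$ the sheafified presentation matrix $\widetilde{K}^{-1} \rightarrow \widetilde{K}^0$ has constant corank $r = \text{rk}\, E$, whereas, if $r_0 > r$, the locus where the corank jumps to $\geq r+1$ is cut out by the $(r_0-r)\times(r_0-r)$ minors; since by minimality all entries have positive degree, the cone over this locus inside the cone over $Y$ contains the vertex, and the generic height bound for determinantal ideals makes it meet $Y$ as soon as $\dim Y \geq (r_1-r_0+r+1)(r+1)$ --- a contradiction. Hence $r_0 = r$, and $E$ is a surjective image of a sum of line bundles of the same rank, hence isomorphic to it. If you want to salvage your outline, the missing ingredient is exactly such a uniform-in-$m$ control of the presentation of $\text{H}^0_{\ast}(F)$ together with an argument converting large $\dim Y$ into $r_0 = \text{rk}\, E$; intermediate-cohomology vanishing plus induction on $\dim X$ will not do it.
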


\begin{theorem}
Let $X$ be a locally complete intersection closed subscheme of 
${\mathbb P}^n$, of pure codimension $c$. If, for every $m > 0$, $X$ extends 
to a locally complete intersection closed subscheme $Y$ of 
${\mathbb P}^{n+m}$, of pure codimension $c$, then $X$ is a complete 
intersection. 
\end{theorem}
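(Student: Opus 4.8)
The plan is to deduce Theorem~2 from Theorem~1 applied to the conormal bundle of $X$, and then to promote the resulting splitting to a system of $c$ forms cutting $X$ out of ${\mathbb P}^n$. \emph{Step 1: the conormal bundle extends.} Since $X$ is a local complete intersection of pure codimension $c$, the conormal sheaf ${\mathcal N}^{\vee} := {\mathcal I}_{X,{\mathbb P}^n}/{\mathcal I}_{X,{\mathbb P}^n}^2$ is a vector bundle of rank $c$ on $X$. Suppose $X$ extends to $Y \subset {\mathbb P}^{n+m}$, a local complete intersection of pure codimension $c$, and let $L \simeq {\mathbb P}^n$ be the linear subspace from the introduction, so that $X = Y \cap L$ while ${\mathcal O}_Y$ and ${\mathcal O}_L$ are ${\mathcal Tor}$-independent over ${\mathcal O}_{{\mathbb P}^{n+m}}$. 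Locally ${\mathcal I}_Y$ is generated by a regular sequence $f_1,\dots,f_c$, and then $f_1,\dots,f_c$ together with the linear forms defining $L$ form a regular sequence as well, so locally ${\mathcal I}_X$ is generated by the images $\overline f_1,\dots,\overline f_c$ of the $f_i$ in ${\mathcal O}_L$. One reads off from this both ${\mathcal N}^{\vee}_Y\,\vert\,L \simeq {\mathcal N}^{\vee}$ and, as ${\mathcal N}^{\vee}_Y$ is locally ${\mathcal O}_Y^{\oplus c}$, the vanishing ${\mathcal Tor}_i^{{\mathcal O}_{{\mathbb P}^{n+m}}}({\mathcal N}^{\vee}_Y,{\mathcal O}_L) \simeq {\mathcal Tor}_i^{{\mathcal O}_{{\mathbb P}^{n+m}}}({\mathcal O}_Y,{\mathcal O}_L)^{\oplus c} = 0$ for $i > 0$. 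Hence ${\mathcal N}^{\vee}$ extends to the vector bundle ${\mathcal N}^{\vee}_Y$ on $Y$ for every $m$, and Theorem~1 gives ${\mathcal N}^{\vee} \simeq \bigoplus_{i=1}^{c} {\mathcal O}_X(-a_i)$ for suitable integers $a_1,\dots,a_c$.

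\emph{Step 2: lifting the splitting.} Write ${\mathcal I}_X := {\mathcal I}_{X,{\mathbb P}^n}$. It suffices to find forms $f_i \in H^0({\mathbb P}^n,{\mathcal I}_X(a_i))$, $1 \le i \le c$, whose images in $H^0(X,{\mathcal N}^{\vee}(a_i))$ recover the splitting of Step~1 — so that they generate ${\mathcal N}^{\vee}$, hence, by Nakayama's lemma, ${\mathcal I}_X$, at each point of $X$ — and which have no common zero on ${\mathbb P}^n \setminus X$: for then $f_1,\dots,f_c$ generate ${\mathcal I}_X$ everywhere, and since $X$ has pure codimension $c$ and ${\mathcal O}_{{\mathbb P}^n}$ is Cohen--Macaulay they form a regular sequence, so $X$ is a complete intersection of multidegree $(a_1,\dots,a_c)$. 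From the exact sequences $0 \to {\mathcal I}_X^2 \to {\mathcal I}_X \to {\mathcal N}^{\vee} \to 0$ twisted by $a_i$, the obstruction to lifting the $i$-th distinguished section of ${\mathcal N}^{\vee}(a_i)$ to $H^0({\mathcal I}_X(a_i))$ is its image in $H^1({\mathbb P}^n,{\mathcal I}_X^2(a_i))$, and the crucial point — where the extension hypothesis enters a second time — is that these groups vanish. Indeed, the first infinitesimal neighbourhood $V({\mathcal I}_X^2)$ is again a locally Cohen--Macaulay closed subscheme of ${\mathbb P}^n$ of pure codimension $c$, because $R/(f)^2$ is Cohen--Macaulay whenever $(f)$ is generated by a regular sequence in a Cohen--Macaulay ring $R$; and ${\mathcal I}_X^2$ extends to ${\mathcal I}_Y^2$ for every $m$, by a local computation with the regular sequence exactly as in Step~1, using the ${\mathcal Tor}$-independence of the first infinitesimal neighbourhood of $Y$ with $L$. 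Therefore the method of Coand\u{a} and Trautmann applies to ${\mathcal I}_X^2$ — just as, in the proof of Theorem~1, it applies to the bundles occurring there — and yields $H^1({\mathbb P}^n,{\mathcal I}_X^2(a)) = 0$ for all $a$; the obstructions then vanish, and, with the vanishing in hand, a suitably general choice of lifts also avoids common zeros off $X$. (Alternatively, one runs the method directly on the graded ideal of $X$ to see that its minimal free resolution is the Koszul complex on $c$ forms, which already contains the assertion.)

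\emph{The main obstacle.} The hard part is this cohomological vanishing, $H^1({\mathbb P}^n,{\mathcal I}_X^2(a_i)) = 0$, and the passage from it to actual equations. It cannot follow from Theorem~1 alone: a pair of skew lines $X \subset {\mathbb P}^3$ has conormal bundle ${\mathcal N}^{\vee} \simeq {\mathcal O}_X(-1) \oplus {\mathcal O}_X(-1)$, a direct sum of line bundles, yet is not a complete intersection (it is disconnected, while every positive-dimensional complete intersection in a projective space is connected), so the splitting of the conormal bundle does not by itself force a complete intersection. What makes the argument work is that the extension hypothesis is inherited by the square of the ideal sheaf — equivalently, by the first infinitesimal neighbourhood — so the same machine that proves Theorem~1 can be applied once more to produce the required $H^1$-vanishing; the supporting facts (that the square of a regular-sequence ideal defines a Cohen--Macaulay scheme, and the Bertini-type elimination of common zeros off $X$) are routine.
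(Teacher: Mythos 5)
Your strategy (split the conormal bundle via Theorem 1, then lift the splitting to $c$ equations) is genuinely different from the paper's proof, which never looks at the conormal bundle: there, Lemmas 4 and 5 show that for $m$ large the minimal free resolution of the saturated ideal $I(Y)$ restricts to that of $I(X)$, and a determinantal codimension bound applied to $\widetilde G^{-1}\to\widetilde G^{0}$ (which has constant corank $c$ along $Y$ because $Y$ is a locally complete intersection) forces the number of minimal generators of $I(X)$ to be exactly $c$, whence $X$ is a complete intersection directly. Your route could be made to work, but as written it has a genuine gap precisely at the point you yourself flag as the hard part: the vanishing $H^1({\mathbb P}^n,{\mathcal I}_X^2(a))=0$ is asserted, not proved. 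The claim that ``the method applies to ${\mathcal I}_X^2$ just as it applies to the bundles in Theorem 1 and yields $H^1=0$'' is not a proof and the analogy is misleading: the machinery of Lemmas 4--5 plus the corank argument produces statements about the minimal generators of $H^0_*$ (e.g.\ that a bundle splits); it does not by itself produce any $H^1$-vanishing (a direct sum of line bundles on a non-arithmetically-Cohen--Macaulay $X$ has nonzero $H^1_*$). To obtain $H^1_*({\mathcal I}_X^2)=0$ one has to run the argument of the paper's Theorem 3 on the first infinitesimal neighbourhood $X^{(1)}=V({\mathcal I}_X^2)$: lift the resolution, check that the relevant syzygy sheaf is a vector bundle on ${\mathbb P}^n$ (local Cohen--Macaulayness of $X^{(1)}$ plus Auslander--Buchsbaum), check that it extends to the corresponding syzygy bundle for $Y^{(1)}$, and invoke the Babylonian tower theorem for vector bundles on projective spaces to split it, concluding that $X^{(1)}$ is arithmetically Cohen--Macaulay. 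None of this appears in your write-up, and it is exactly the missing content; your supporting facts (that $R/I^2$ is Cohen--Macaulay when $I$ is generated by a regular sequence, and the Tor-independence of $Y^{(1)}$ with $L$) are fine but are only the easy part.

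A second, smaller flaw: eliminating common zeros off $X$ by a ``suitably general choice of lifts'' does not work as stated, since the only freedom in each lift is the addition of an element of $H^0({\mathcal I}_X^2(a_i))$, which may be zero, so there may be nothing to choose. Fortunately this step is unnecessary: because the $f_i$ generate ${\mathcal I}_X$ at every point of $X$, the subscheme $X$ is open and closed in $X'=V(f_1,\dots,f_c)$, and for $c\le n-1$ an intersection of $c$ hypersurfaces in ${\mathbb P}^n$ is connected, so $X'=X$ scheme-theoretically; the connectedness fact you quote in your skew-lines discussion is precisely the tool to use here (the case $\dim X=0$ needs separate treatment, as it does in the paper). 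Finally, your parenthetical ``alternative'' --- running the method directly on $I(X)$ to see that its resolution is a Koszul complex on $c$ forms --- is in substance the paper's actual proof, but that assertion requires exactly the corank/determinantal argument you do not supply.
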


Notice that, under the hypothesis of Theorem 1, $E$ extends to $F$ if and 
only if $F \, \vert \, {\mathbb P}^n \simeq E$, and, under the hypothesis of 
Theorem 2, $X$ extends to $Y$ if and only if $Y \cap {\mathbb P}^n = X$ 
(as schemes). 

In the case where $X$ and $Y$ are assumed to be smooth, Theorem 1 is due to 
Tyurin (1976), and Theorem 2 is due to Barth and Van de Ven (1974), 
Barth (1975) and Tyurin (1976). The more general version of Theorem 2 stated 
above is due to Flenner (1985). 

The proofs of these theorems are based on the next three lemmas. The first 
two of them are very elementary and the proof of the third one uses the idea 
of Coand\u{a} and Trautmann (2006). Before stating and proving them, we 
recall the following notation: for $i \geq 0$, 
$\text{H}^i_{\ast}(\mathcal F)$ denotes the graded $S$-module 
$\bigoplus_{d\in {\mathbb Z}}\text{H}^i({\mathcal F}(d))$, and, for $Z$ closed 
subscheme of ${\mathbb P}^{n+m}$, ${\mathcal G}_Z$ denotes the sheaf 
${\mathcal G}\otimes_{{\mathcal O}_{{\mathbb P}^{n+m}}}{\mathcal O}_Z$. 

\begin{lemma}
Assume that a coherent sheaf $\mathcal A$ on ${\mathbb P}^N$ extends to a 
coherent sheaf $\mathcal B$ on ${\mathbb P}^{N+1}$ with the property that 
${\fam0 H}^0({\mathcal B}(-t)) = 0$ for $t >> 0$. Let $\cdots \rightarrow 
G^{-1} \rightarrow G^0 \rightarrow {\fam0 H}^0_{\ast}(\mathcal B) 
\rightarrow 0$ be a graded minimal free resolution of 
${\fam0 H}^0_{\ast}(\mathcal B)$ over $k[X_0,\ldots ,X_{N+1}]$. If 
${\fam0 H}^0_{\ast}(\mathcal B) \rightarrow {\fam0 H}^0_{\ast}(\mathcal A)$ is 
surjective then $G^{\bullet}/X_{N+1}G^{\bullet}$ is a minimal free resolution 
of ${\fam0 H}^0_{\ast}(\mathcal A)$ over $k[X_0,\ldots ,X_N]$. 
\end{lemma}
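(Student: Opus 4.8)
The plan is to reduce the assertion to a single short exact sequence of graded modules together with a standard fact about minimal free resolutions. Write $\widetilde R = k[X_0,\dots,X_{N+1}]$, $R = \widetilde R/(X_{N+1}) = k[X_0,\dots,X_N]$, and $M = \mathrm{H}^0_{\ast}(\mathcal B)$, regarded as a graded $\widetilde R$-module. The crux is to produce an exact sequence of graded $\widetilde R$-modules
\[
0 \longrightarrow M(-1) \xrightarrow{X_{N+1}} M \longrightarrow \mathrm{H}^0_{\ast}(\mathcal A) \longrightarrow 0
\]
in which $X_{N+1}$ is a non-zero-divisor on $M$; granting this, the conclusion follows by reducing $G^{\bullet}$ modulo $X_{N+1}$.

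First I would unwind the extension hypothesis. Since $L = {\mathbb P}^N$ is the hyperplane $X_{N+1} = 0$ of ${\mathbb P}^{N+1}$, the condition ${\mathcal Tor}_1^{{\mathcal O}_{{\mathbb P}^{N+1}}}(\mathcal B,{\mathcal O}_L) = 0$ is equivalent to the injectivity of the multiplication map ${\mathcal B}(-1) \to {\mathcal B}$ by $X_{N+1}$, and, its cokernel being ${\mathcal B}_L \simeq {\mathcal A}$, one gets the exact sequence of sheaves $0 \to {\mathcal B}(-1) \xrightarrow{X_{N+1}} {\mathcal B} \to {\mathcal A} \to 0$. Twisting by $d$, passing to cohomology and summing over $d \in {\mathbb Z}$ yields the exact sequence $0 \to M(-1) \xrightarrow{X_{N+1}} M \xrightarrow{\rho} \mathrm{H}^0_{\ast}(\mathcal A)$; in particular $X_{N+1}$ is a non-zero-divisor on $M$, and $\rho$ induces an injection $M/X_{N+1}M \hookrightarrow \mathrm{H}^0_{\ast}(\mathcal A)$. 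Since $\rho$ is the restriction map occurring in the statement, the surjectivity hypothesis promotes this injection to an isomorphism, which gives the exact sequence displayed above. I would also record here that the vanishing $\mathrm{H}^0(\mathcal B(-t)) = 0$ for $t \gg 0$ makes $M$ bounded below, hence --- by Serre's finiteness theorem and Castelnuovo--Mumford regularity --- a finitely generated $\widetilde R$-module, so that $G^{\bullet}$ consists of finitely many finite free modules and $G^{\bullet}/X_{N+1}G^{\bullet}$ is an honest finite free complex over $R$.

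It then remains to apply the elementary fact that if $x$ is a non-zero-divisor on a graded $\widetilde R$-module $M$ and $G^{\bullet} \to M$ is a graded minimal free resolution, then $G^{\bullet}/xG^{\bullet} \to M/xM$ is a graded minimal free resolution over $\widetilde R/(x)$. Indeed, tensoring the free resolution $0 \to \widetilde R(-1) \xrightarrow{x} \widetilde R \to \widetilde R/(x) \to 0$ with $M$ and using that $x$ is a non-zero-divisor shows $\mathrm{Tor}^{\widetilde R}_i(M,\widetilde R/(x)) = 0$ for all $i \ge 1$; hence $G^{\bullet}\otimes_{\widetilde R}\widetilde R/(x) = G^{\bullet}/xG^{\bullet}$ has homology $M\otimes_{\widetilde R}\widetilde R/(x) = M/xM$ in the top degree and zero elsewhere, so it resolves $M/xM$, and minimality is inherited because the entries of the differentials of $G^{\bullet}$ lie in $(X_0,\dots,X_{N+1})$ and thus reduce into $(X_0,\dots,X_N)$. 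Applying this with $x = X_{N+1}$, and using the isomorphism $M/X_{N+1}M \simeq \mathrm{H}^0_{\ast}(\mathcal A)$ obtained above, finishes the proof. The argument is essentially formal, and the only step where the ``extends'' hypothesis is genuinely used --- hence the one I would set out most carefully --- is the passage from the vanishing of ${\mathcal Tor}_1$ to the non-zero-divisor property of $X_{N+1}$ on ${\mathcal B}$, together with the extraction of the short exact sequence of graded modules from the cohomology sequence and the surjectivity assumption.
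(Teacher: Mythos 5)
Your proof is correct and follows essentially the same route as the paper: both deduce from the extension hypothesis the exact sequence $0 \to \mathcal B(-1) \xrightarrow{X_{N+1}} \mathcal B \to \mathcal A \to 0$, conclude that $X_{N+1}$ is regular on $\mathrm{H}^0_{\ast}(\mathcal B)$ with quotient $\mathrm{H}^0_{\ast}(\mathcal A)$ (via the surjectivity hypothesis), and then reduce the minimal resolution modulo $X_{N+1}$. Yours is simply a more detailed write-up, spelling out the Tor computation, the role of the $\mathrm{H}^0(\mathcal B(-t))=0$ hypothesis for finite generation, and the preservation of minimality, all of which the paper leaves implicit.
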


\begin{proof}
Using the exact sequence: 
\[
\begin{CD}
0 @>>> {\mathcal B}(-1) @>X_{N+1}>> 
{\mathcal B} @>>> {\mathcal A} @>>> 0 
\end{CD}
\]
one deduces, firstly, that $X_{N+1}$ is 
$\text{H}^0_{\ast}(\mathcal B)$-regular, hence $G^{\bullet}/X_{N+1}G^{\bullet}$ 
is a minimal free resolution of 
$\text{H}^0_{\ast}(\mathcal B)/X_{N+1}\text{H}^0_{\ast}(\mathcal B)$ over 
$k[X_0,\ldots ,X_N]$ and, then, that 
$\text{H}^0_{\ast}(\mathcal B)/X_{N+1}\text{H}^0_{\ast}(\mathcal B) \simeq 
\text{H}^0_{\ast}(\mathcal A)$.
\end{proof}

\begin{lemma}
Assume that a coherent sheaf $\mathcal F$ on ${\mathbb P}^n$ extends to a 
coherent sheaf $\mathcal G$ on ${\mathbb P}^{n+m}$ with the property that 
${\fam0 H}^i({\mathcal G}(-t)) = 0$ for $t >> 0$, $i = 0, \ldots ,m-1$. 
Let $\cdots \rightarrow G^{-1} \rightarrow G^0 \rightarrow 
{\fam0 H}^0_{\ast}(\mathcal G) \rightarrow 0$ be a minimal free resolution of 
the graded $R$-module ${\fam0 H}^0_{\ast}(\mathcal G)$. If there exists an 
$(n+1)$-dimensional linear subspace $P$ of ${\mathbb P}^{n+m}$ containing 
$L = {\mathbb P}^n$ such that ${\fam0 H}^0_{\ast}({\mathcal G}_P) \rightarrow 
{\fam0 H}^0_{\ast}(\mathcal F)$ is surjective, then $G^{\bullet}\otimes_RS$ is 
a minimal free resolution of the graded $S$-module 
${\fam0 H}^0_{\ast}(\mathcal F)$. 
\end{lemma}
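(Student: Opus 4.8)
The plan is to deduce the statement from repeated application of Lemma 1 along a flag of linear subspaces joining $L={\mathbb P}^n$ to ${\mathbb P}^{n+m}$. Choose linear subspaces $L=P_n\subset P_{n+1}\subset\cdots\subset P_{n+m}={\mathbb P}^{n+m}$ with $\dim P_{n+j}=n+j$, $P_{n+1}=P$, and $P_{n+2},\dots,P_{n+m-1}$ general among those containing $P$, and for $0\leq j\leq m$ put ${\mathcal G}_{P_{n+j}}={\mathcal G}\otimes_{{\mathcal O}_{{\mathbb P}^{n+m}}}{\mathcal O}_{P_{n+j}}$, so that ${\mathcal G}_{P_n}={\mathcal F}$ and ${\mathcal G}_{P_{n+m}}={\mathcal G}$. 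Since $L$ is cut out locally by the regular sequence $X_{n+1},\dots,X_{n+m}$, the vanishing ${\mathcal Tor}_i({\mathcal G},{\mathcal O}_L)=0$ for $i>0$ says that this sequence is locally ${\mathcal G}$-regular; a routine argument, using this, the generality of the intermediate $P_{n+j}$, and the hypothesis that $\text{H}^i({\mathcal G}(-t))=0$ for $t\gg0$ and $0\leq i\leq m-1$, then gives, for each $j\in\{1,\dots,m\}$: (i) an exact sequence of sheaves $0\to{\mathcal G}_{P_{n+j}}(-1)\to{\mathcal G}_{P_{n+j}}\to{\mathcal G}_{P_{n+j-1}}\to 0$, the first map being multiplication by the linear form of $P_{n+j-1}$ in $P_{n+j}$, so that ${\mathcal G}_{P_{n+j-1}}$ extends to ${\mathcal G}_{P_{n+j}}$; (ii) an exact Koszul resolution of ${\mathcal G}_{P_{n+j}}$ by sums of twists of ${\mathcal G}$; (iii) $\text{H}^0({\mathcal G}_{P_{n+j}}(-t))=0$ for $t\gg0$, obtained by splitting the resolution in (ii) into short exact sequences and chasing cohomology, only the $\text{H}^i({\mathcal G})$ with $i\leq m-j$ being used; and (iv) $\text{H}^1({\mathcal G}_{P_{n+\ell}}(-t))=0$ for $t\gg0$ whenever $\ell\geq 2$, so that, by Serre vanishing, $\text{H}^1_{\ast}({\mathcal G}_{P_{n+\ell}})$ has finite length for $\ell\geq 2$.

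The crucial point is to push the surjectivity assumption up the flag, i.e. to show that $\text{H}^1_{\ast}({\mathcal G}_{P_{n+\ell}})=0$ for $\ell=2,\dots,m$; this is the one step that genuinely uses both hypotheses, and the place I expect to need care. From the long exact cohomology sequence of $0\to{\mathcal G}_P(-1)\to{\mathcal G}_P\to{\mathcal F}\to 0$, the assumed surjectivity of $\text{H}^0_{\ast}({\mathcal G}_P)\to\text{H}^0_{\ast}({\mathcal F})$ is equivalent to the linear form $\ell_1$ cutting out $L$ in $P$ being a nonzerodivisor on $\text{H}^1_{\ast}({\mathcal G}_P)$. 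The long exact sequence of $0\to{\mathcal G}_{P_{n+2}}(-1)\to{\mathcal G}_{P_{n+2}}\to{\mathcal G}_P\to 0$ realizes $\text{H}^1_{\ast}({\mathcal G}_{P_{n+2}})/\ell_2\text{H}^1_{\ast}({\mathcal G}_{P_{n+2}})$, where $\ell_2$ cuts out $P$ in $P_{n+2}$, as a graded submodule of $\text{H}^1_{\ast}({\mathcal G}_P)$; hence $\ell_1$ is a nonzerodivisor on this module of finite length, which therefore vanishes, and so $\text{H}^1_{\ast}({\mathcal G}_{P_{n+2}})=0$ by graded Nakayama. For $2\leq\ell<m$, once $\text{H}^1_{\ast}({\mathcal G}_{P_{n+\ell}})=0$, the long exact sequence of $0\to{\mathcal G}_{P_{n+\ell+1}}(-1)\to{\mathcal G}_{P_{n+\ell+1}}\to{\mathcal G}_{P_{n+\ell}}\to 0$ shows that the linear form cutting out $P_{n+\ell}$ in $P_{n+\ell+1}$ acts surjectively on the finite-length module $\text{H}^1_{\ast}({\mathcal G}_{P_{n+\ell+1}})$, forcing it to be zero as well.

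With this in hand, for every $j\in\{1,\dots,m\}$ the three conditions required by Lemma 1 hold for the pair ${\mathcal A}={\mathcal G}_{P_{n+j-1}}$ on $P_{n+j-1}$ and ${\mathcal B}={\mathcal G}_{P_{n+j}}$ on $P_{n+j}$: ${\mathcal A}$ extends to ${\mathcal B}$; $\text{H}^0({\mathcal B}(-t))=0$ for $t\gg0$; and $\text{H}^0_{\ast}({\mathcal B})\to\text{H}^0_{\ast}({\mathcal A})$ is surjective — for $j=1$ this is the hypothesis of the lemma, and for $j\geq 2$ it follows from $\text{H}^1_{\ast}({\mathcal G}_{P_{n+j}})=0$ through the long exact sequence above. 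I then finish by descending induction on $j$: writing $R_{n+j}$ for the homogeneous coordinate ring of $P_{n+j}$, so that $R_{n+m}=R$, $R_n=S$, and $R_{n+j-1}$ is $R_{n+j}$ modulo the linear form cutting out $P_{n+j-1}$, the complex $G^{\bullet}\otimes_R R_{n+j}$ is a minimal free resolution of $\text{H}^0_{\ast}({\mathcal G}_{P_{n+j}})$ over $R_{n+j}$; the case $j=m$ is the hypothesis, and the passage from $j$ to $j-1$ is precisely one application of Lemma 1, since $G^{\bullet}\otimes_R R_{n+j-1}$ is $G^{\bullet}\otimes_R R_{n+j}$ modulo that linear form. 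Taking $j=0$ yields that $G^{\bullet}\otimes_R S$ is a minimal free resolution of $\text{H}^0_{\ast}({\mathcal F})$, which is the assertion.
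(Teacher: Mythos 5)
Your proposal is correct and follows essentially the same route as the paper: a flag of linear subspaces from $L$ through $P$ up to ${\mathbb P}^{n+m}$, a decreasing induction giving the vanishing of the relevant cohomology in large negative twists, propagation of the surjectivity up the flag by showing $\text{H}^1_{\ast}({\mathcal G}_{P^{(j)}})=0$ for $j\geq 2$, and then an iterated application of the preceding lemma down the flag. The only (harmless) deviation is in the middle step, where you obtain the $\text{H}^1_{\ast}$-vanishing from finite length plus graded Nakayama, while the paper runs an increasing induction proving $\text{H}^i_{\ast}({\mathcal G}_{P^{(j)}})=0$ for $i=1,\ldots ,j$ directly from the restriction sequences; both rest on the same exactness and vanishing facts that the paper likewise leaves as routine.
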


\begin{proof}
Consider a saturated flag of linear subspaces $P^{(0)} = L \subset P = P^{(1)} 
\subset \ldots \subset P^{(m)} = {\mathbb P}^{n+m}$. By decreasing induction 
on $j = m, \ldots ,1$ one shows easily that 
$\text{H}^i({\mathcal G}_{P^{(j)}}(-t)) = 0$ for $t >> 0$, $i = 0, \ldots ,
j-1$. Since $\text{H}^0_{\ast}({\mathcal G}_P) \rightarrow 
\text{H}^0_{\ast}(\mathcal F)$ is surjective and 
$\text{H}^1({\mathcal G}_P(t)) = 0$ for $t >> 0$, it follows that 
$\text{H}^1_{\ast}({\mathcal G}_P) = 0$. One shows now, by increasing induction 
on $j = 1, \ldots , m$, that $\text{H}^i_{\ast}({\mathcal G}_{P^{(j)}}) = 0$, 
$i = 1, \ldots , j$. But $\text{H}^1_{\ast}({\mathcal G}_{P^{(j)}}) = 0$ implies 
that $\text{H}^0_{\ast}({\mathcal G}_{P^{(j)}}) \rightarrow 
\text{H}^0_{\ast}({\mathcal G}_{P^{(j-1)}})$ is surjective, $j = 2 , \ldots , m$, 
and one finally applies Lemma 3.
\end{proof}

\begin{lemma}
Let $\mathcal F$ be a coherent sheaf on ${\mathbb P}^n$, $n \geq 2$, with the 
property that ${\fam0 H}^i({\mathcal F}(-t)) = 0$ for $t >> 0$, $i = 0, 1$. 
For $i \in {\mathbb Z}$, let ${\mu}_i$ denote the number of minimal generators 
of degree $i$ of the graded $S$-module ${\fam0 H}^0_{\ast}(\mathcal F)$. 
If, for some $m > \sum_{i>j}{\mu}_i\, {\fam0 h}^1({\mathcal F}(j))$, 
$\mathcal F$ extends to a coherent sheaf $\mathcal G$ on ${\mathbb P}^{n+m}$ 
with ${\fam0 H}^i({\mathcal G}(-t)) = 0$ for $t >> 0$, $i = 1, \ldots ,m$, 
then there exists an $(n+1)$-dimensional linear subspace $P$ of 
${\mathbb P}^{n+m}$ containing $L = {\mathbb P}^n$ such that 
${\fam0 H}^0_{\ast}({\mathcal G}_P) \rightarrow {\fam0 H}^0_{\ast}(\mathcal F)$ 
is surjective. 
\end{lemma}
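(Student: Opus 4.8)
The plan is to reduce the statement to lifting the minimal generators of the finitely generated graded $S$-module $M:=\mathrm{H}^0_*(\mathcal F)$, one at a time, to $\mathrm{H}^0_*(\mathcal G_P)$ for a single $(n+1)$-plane $P\supseteq L$, and then to produce such a $P$ by an obstruction count in which the hypothesis $m>\sum_{i>j}\mu_i\,\mathrm{h}^1(\mathcal F(j))$ is exactly what is needed.

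Parametrise the $(n+1)$-dimensional linear subspaces of $\mathbb P^{n+m}$ containing $L$ by the points $[w]$ of $\mathbb P(W)$, where $W$ is an $m$-dimensional $k$-vector space with $\mathcal N^\vee:=\mathcal I_{L,\mathbb P^{n+m}}/\mathcal I_{L,\mathbb P^{n+m}}^2\cong\mathcal O_L(-1)\otimes_k W^\vee$; let $P_{[w]}$ be the corresponding plane and $z$ the linear form cutting out $L$ in $P_{[w]}$. Since $\mathcal G$ extends $\mathcal F$, the linear forms defining $L$ in $\mathbb P^{n+m}$ form a $\mathcal G$-regular sequence near $L$ (Matsumura (1986, Thm.~16.5)); hence $z$ is a non-zero-divisor on $\mathcal G_{P_{[w]}}$, and, as $\mathcal G_{P_{[w]}}|_L=\mathcal F$, there is an exact sequence $0\to\mathcal G_{P_{[w]}}(-1)\xrightarrow{\,z\,}\mathcal G_{P_{[w]}}\to\mathcal F\to 0$ on $P_{[w]}$. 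Completing $L\subset P_{[w]}$ to a flag with non-zero-divisor steps and running a decreasing induction along it (cf.\ the proof of Lemma 4, with $\mathrm{H}^i(\mathcal G(-t))=0$ for $t\gg 0$, $i=1,\dots,m$) yields $\mathrm{H}^1(\mathcal G_{P_{[w]}}(-t))=0$ for $t\gg 0$, so $\mathrm{H}^1_*(\mathcal G_{P_{[w]}})$ is finite-dimensional over $k$. The cohomology sequence of the displayed sequence identifies the cokernel of $\mathrm{H}^0_*(\mathcal G_{P_{[w]}})\to M$ with the $z$-torsion of $\mathrm{H}^1_*(\mathcal G_{P_{[w]}})$; being finite-dimensional, the latter admits $z$ as a non-zero-divisor only if it vanishes. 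Thus $\mathrm{H}^0_*(\mathcal G_{P_{[w]}})\to M$ is surjective if and only if every minimal generator of $M$ lifts to $\mathrm{H}^0_*(\mathcal G_{P_{[w]}})$ (the image of $\mathrm{H}^0_*(\mathcal G_{P_{[w]}})$ in $M$ being a submodule, hence equal to $M$ once it contains the generators).

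Fix a minimal generator $g$ of $M$, of degree $i$. The extension property of $\mathcal G$ gives, for each $d\geq 1$, exact sequences $0\to\mathrm{Sym}^d(\mathcal N^\vee)\otimes\mathcal F\to\mathcal G|_{\widehat L_d}\to\mathcal G|_{\widehat L_{d-1}}\to 0$, where $\widehat L_d$ is the $d$-th infinitesimal neighbourhood of $L$ in $\mathbb P^{n+m}$; restricting to $P_{[w]}$ replaces the factor $\mathrm{Sym}^d W^\vee$ by evaluation at $w^{\otimes d}$, so that $\mathrm{Sym}^d(\mathcal N^\vee_{L/P_{[w]}})\otimes\mathcal F\cong\mathcal F(-d)$. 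Since $\mathrm{H}^1(\mathcal F(-t))=0$ and $\mathrm{H}^1(\mathcal G_{P_{[w]}}(-t))=0$ for $t\gg 0$, $g$ lifts to $\mathrm{H}^0(\mathcal G_{P_{[w]}}(i))$ as soon as it lifts across the first $D$ neighbourhoods of $L$ in $P_{[w]}$, for a fixed large $D$; and the obstruction to passing the $d$-th such step lies in $\mathrm{H}^1(\mathcal F(i-d))$, being the image under evaluation at $w^{\otimes d}$ of the corresponding obstruction over $\mathbb P^{n+m}$ in $\mathrm{H}^1(\mathcal F(i-d))\otimes\mathrm{Sym}^d W^\vee$ --- hence given by homogeneous forms of degree $d$ in the coordinates of $\mathbb P(W)$, at most $\mathrm{h}^1(\mathcal F(i-d))$ of them once one quotients by the ambiguity of the partial lift already chosen (which lives in $\mathrm{H}^0(\mathcal F(i-d+1))$ times a power of $W^\vee$). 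Therefore the set of $[w]\in\mathbb P(W)$ for which $g$ lifts to $\mathrm{H}^0(\mathcal G_{P_{[w]}}(i))$ is cut out by the vanishing of at most $\sum_{d\geq 1}\mathrm{h}^1(\mathcal F(i-d))=\sum_{j<i}\mathrm{h}^1(\mathcal F(j))$ homogeneous forms of positive degree.

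Summing over the finitely many minimal generators of $M$, the locus in $\mathbb P(W)\cong\mathbb P^{m-1}$ where all of them lift is cut out by at most $\sum_i\mu_i\sum_{j<i}\mathrm{h}^1(\mathcal F(j))=\sum_{i>j}\mu_i\,\mathrm{h}^1(\mathcal F(j))<m$ homogeneous forms of positive degree; as at most $m-1$ such forms always have a common zero in $\mathbb P^{m-1}$, this locus is non-empty, and any $[w]$ in it gives a plane $P:=P_{[w]}$ with $\mathrm{H}^0_*(\mathcal G_P)\to\mathrm{H}^0_*(\mathcal F)$ surjective. The serious work --- and the point where the method of Coand\u{a} and Trautmann is used --- is the third paragraph: one has to pin down the infinitesimal obstructions, check that clearing the (finitely many) non-zero ones produces an honest section of $\mathcal G_{P_{[w]}}(i)$, and, above all, keep track of the successive partial-lift ambiguities so that ``$g$ lifts over $P_{[w]}$'' is really cut out by at most $\sum_{j<i}\mathrm{h}^1(\mathcal F(j))$ forms; granting this, the conclusion is the elementary count just made.
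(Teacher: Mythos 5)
Your proposal reproduces the frame of the paper's argument --- climb the infinitesimal neighbourhoods of $L$, locate the obstructions in $\mathrm{H}^1(\mathcal F(\cdot))$, show that the ``good'' planes $P \supseteq L$ form a locus in $\mathbb P(W) \simeq \mathbb P^{m-1}$ cut out by fewer than $m$ forms, pick a common zero, and finally convert lifts over all the neighbourhoods $P\cap L_i$ into surjectivity of $\mathrm{H}^0_{\ast}(\mathcal G_P) \to \mathrm{H}^0_{\ast}(\mathcal F)$ using $\mathrm{H}^1(\mathcal G_P(-t)) = 0$ for $t \gg 0$ --- but it omits precisely the step that carries the content of the lemma, the one you defer with ``granting this''. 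The claim that, for a fixed minimal generator $g$ of degree $i$, the set of $[w]$ over which $g$ lifts through all neighbourhoods is cut out by at most $\sum_{j<i}\mathrm h^1(\mathcal F(j))$ homogeneous forms does not follow from your obstruction description: the obstruction at the $d$-th step is defined only after a lift over the $(d-1)$-st neighbourhood has been chosen, such a lift exists only for $[w]$ in the locus already constructed and is not unique, so the ``at most $\mathrm h^1(\mathcal F(i-d))$ forms of degree $d$'' are not well-defined polynomials on $\mathbb P(W)$, and the ambiguity you acknowledge (valued in $\mathrm H^0(\mathcal F(i-d+1))$ tensored with powers of $W^{\vee}$) cannot simply be ``quotiented out'': a priori the good locus need not be a set-theoretic intersection of that many hypersurfaces at all. (A smaller point: your finite-dimensionality remark about the $z$-torsion of $\mathrm H^1_{\ast}(\mathcal G_{P})$ proves nothing --- if it forced the cokernel to vanish, the lemma would hold for every $P$ --- though nothing is lost since the reduction to lifting generators is trivial anyway.)

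The paper avoids this fibrewise obstruction chase altogether, and that is where the Coand\u{a}--Trautmann/Kempf idea enters. Working universally on ${\mathbb P}^{n+m}$, for each section $s$ it constructs (as in Coand\u{a} (2010, Lemma 5)) a homogeneous ideal $J(s) \subset S' = k[X_{n+1},\dots ,X_{n+m}]$, enlarged degree by degree by at most $\mathrm h^1(\mathcal F(-j))$ new generators in degree $j$, so that $s$ lifts unconditionally over the subschemes $Y_i(s)$ cut out inside ${\mathcal O}_{L_i} \simeq \bigoplus_j {\mathcal O}_L(-j)\otimes_k S'_j$ by $J(s)$: the obstruction is killed by shrinking the neighbourhood, not by choosing $w$. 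The generators of the resulting ideal $J$ are fixed polynomials, at most $\sum_{i>j}\mu_i\,\mathrm h^1(\mathcal F(j)) < m$ of them, so they have a common zero $p \in L' \simeq \mathbb P^{m-1}$; setting $P = \langle L, p\rangle$ one has $P\cap L_i \subseteq Y_i$, so the lifted sections restrict, and the final step with $\mathrm H^1(\mathcal G_P(d-i-1)) = 0$ for $i \gg 0$ (which you do have) concludes. So to complete your proof you would need to supply a construction of this $J(s)$ type, or an equivalently rigorous bookkeeping of the partial-lift ambiguities; as written, the counting in your last paragraph is unsupported.
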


\begin{proof}
We recall that $\text{h}^1({\mathcal F}(j))$ denotes 
$\text{dim}_k\text{H}^1({\mathcal F}(j))$. Now, for $i \geq 0$, let $L_i$ be 
the $i\, $th infinitesimal neighbourhood of $L$ in ${\mathbb P}^{n+m}$, defined 
by the ideal sheaf ${\mathcal I}^{i+1}_{L,{\mathbb P}^{n+m}}$. 
Let $L^{\prime}$ be the 
linear subspace of ${\mathbb P}^{n+m}$ of equations $X_0 = \ldots = X_n = 0$, 
with coordinate ring $S^{\prime} = k[X_{n+1},\ldots ,X_{n+m}]$. If $\pi : 
{\mathbb P}^{n+m} \setminus L^{\prime} \rightarrow L$ is the linear projection 
then $\pi \, \vert \, L_i \rightarrow L$ is a retract of the inclusion 
$L \hookrightarrow L_i$ and endows ${\mathcal O}_{L_i}$ with a structure of 
${\mathcal O}_L$-algebra. As an ${\mathcal O}_L$-module: 
\[
{\mathcal O}_{L_i} \simeq {\mathcal O}_L \oplus {\mathcal O}_L(-1)
\otimes_kS^{\prime}_1 \oplus \cdots \oplus {\mathcal O}_L(-i)\otimes_k 
S^{\prime}_i\, .
\]
Moreover, if $J \subset S^{\prime}$ is a homogeneous ideal then: 
\[
{\mathcal O}_L(-1)\otimes_k J_1 
\oplus \cdots \oplus {\mathcal O}_L(-i)\otimes_k J_i
\]
is an ideal sheaf of ${\mathcal O}_{L_i}$, hence defines a closed subscheme 
$Y_i$ of ${\mathbb P}^{n+m}$ with $L \subseteq Y_i \subseteq L_i$. Using the 
exact sequence:
\[
0 \longrightarrow {\mathcal O}_L(-i-1)\otimes_k(S^{\prime}_{i+1}/J_{i+1}) 
\longrightarrow {\mathcal O}_{Y_{i+1}} \longrightarrow 
{\mathcal O}_{Y_i} \longrightarrow 0
\]
one deduces easily, by induction on $i$, that 
${\mathcal Tor}_1^{{\mathcal O}_{{\mathbb P}^{n+m}}}({\mathcal G},{\mathcal O}_{Y_i}) 
= 0$, $\forall i \geq 0$. It follows that, tensorizing the above exact 
sequence by $\mathcal G$, one gets an exact sequence: 
\[
0 \longrightarrow {\mathcal F}(-i-1)\otimes_k(S^{\prime}_{i+1}/J_{i+1}) 
\longrightarrow {\mathcal G}_{Y_{i+1}} \longrightarrow 
{\mathcal G}_{Y_i} \longrightarrow 0\, .
\]

Consider, now, a section $s \in \text{H}^0(\mathcal F)$. Using an argument 
similar to that used in the proof of Coand\u{a} (2010, Lemma 5), one can show 
that there exists a homogeneous ideal $J(s) \subset S^{\prime}$, with at most 
$\text{h}^1({\mathcal F}(-j))$ minimal generators in degree $j$, $\forall j 
\geq 1$, with the property that if $Y_i(s)$ is the closed subscheme of 
$L_i$ defined by the ideal sheaf:
\[
{\mathcal O}_L(-1)\otimes_k J(s)_1 
\oplus \cdots \oplus {\mathcal O}_L(-i)\otimes_k J(s)_i
\]
then $s$ can be lifted to a global section of ${\mathcal G}_{Y_i(s)}$, 
$\forall i \geq 1$. Choosing, next, a minimal system of generators of the 
graded $S$-module $\text{H}^0_{\ast}(\mathcal F)$ one deduces the existence 
of an ideal $J \subset S^{\prime}$ generated by at most 
$\sum_{i>j}{\mu}_i\, {\fam0 h}^1({\mathcal F}(j))$ homogeneous elements such 
that $\text{H}^0_{\ast}({\mathcal G}_{Y_i}) \rightarrow \text{H}^0_{\ast}
(\mathcal F)$ is surjective, $\forall i \geq 1$. 

Since $m > \sum_{i>j}{\mu}_i\, {\fam0 h}^1({\mathcal F}(j))$, there exists a 
point $p \in L^{\prime} \simeq {\mathbb P}^{m-1}$ such that all the elements 
of $J$ vanish at $p$. Let $P \subset {\mathbb P}^{n+m}$ be the linear span of 
$L$ and $p$. One has $P\cap L_i \subseteq Y_i$ hence 
$\text{H}^0_{\ast}({\mathcal G}_{P\cap L_i}) \rightarrow \text{H}^0_{\ast}
(\mathcal F)$ is surjective, $\forall i \geq 1$. But $P\cap L_i$ is the 
$i\, $th infinitesimal neighbourhood in $P$ of the hyperplane $L$ of $P$. 
Tensorizing by ${\mathcal G}(d)$, for a {\it fixed} $d \in {\mathbb Z}$, 
the exact sequence: 
\[
0 \longrightarrow {\mathcal O}_P(-i-1) \longrightarrow {\mathcal O}_P 
\longrightarrow {\mathcal O}_{P\cap L_i} \longrightarrow 0
\]
one gets an exact sequence: 
\[
\text{H}^0({\mathcal G}_P(d)) \longrightarrow 
\text{H}^0({\mathcal G}_{P\cap L_i}(d)) \longrightarrow 
\text{H}^1({\mathcal G}_P(d-i-1))\, .
\]
Since $\text{H}^i({\mathcal G}(-t)) = 0$ for $t >> 0$, $i = 1, \ldots ,m$, 
one deduces, as in the proof of Lemma 4, that 
$\text{H}^1({\mathcal G}_P(d-i-1)) = 0$, for $i >> 0$, hence 
$\text{H}^0({\mathcal G}_P(d)) \rightarrow 
\text{H}^0({\mathcal G}_{P\cap L_i}(d))$ is surjective for $i >> 0$. This 
implies that 
$\text{H}^0_{\ast}({\mathcal G}_P) \rightarrow \text{H}^0_{\ast}(\mathcal F)$ 
is surjective.
\end{proof}

\begin{proof}[Proof of Theorem 1]
Let ${\mu}_i$ be the number of minimal generators of degree $i$ of the 
graded $S$-module $\text{H}^0_{\ast}(E)$ and consider a minimal free 
$S$-resolution $\cdots \rightarrow L^{-1} \rightarrow L^0 \rightarrow 
\text{H}^0_{\ast}(E) \rightarrow 0$ with, of course, 
$L^0 \simeq \bigoplus_{i\in {\mathbb Z}}S(-i)^{{\mu}_i}$. Let $r_i := 
\text{rk}\, L^{-i}$, $i \geq 0$. We want to show that $r_0 = \text{rk}\, E =: 
r$. 

Now, from the hypothesis, $\text{H}^i(E(-t)) = 0$ for $t >> 0$, $i = 0, 
\ldots , n-c-1$ and $\text{H}^i(F(-t)) = 0$ for $t >> 0$, $i = 0, \ldots , 
n+m-c-1$. We may assume that $n-c \geq 2$. Let $\cdots \rightarrow K^{-1} 
\rightarrow K^0 \rightarrow \text{H}^0_{\ast}(F) \rightarrow 0$ be a minimal 
free resolution of the graded $R$-module $\text{H}^0_{\ast}(F)$. It follows 
from Lemma 4 and Lemma 5 that if 
$m > \sum_{i>j}{\mu}_i\, {\fam0 h}^1(E(j))$ then $K^{\bullet}\otimes_RS \simeq 
L^{\bullet}$. 

The sheafified morphism ${\widetilde K}^{-1} \rightarrow 
{\widetilde K}^0$ has constant corank $r$ along $Y$. If $r_0 > r$ then the 
$(r_0-r)\times (r_0-r)$ minors of the matrix defining this morphism vanish on 
a closed subscheme of ${\mathbb P}^{n+m}$ of codimension 
$\leq (r_1-(r_0-r)+1)(r_0-(r_0-r)+1) = (r_1-r_0+r+1)(r+1)$. If 
$\text{dim}\, Y \geq (r_1-r_0+r+1)(r+1)$, i.e., if $m \geq 
(r_1-r_0+r+1)(r+1) - n + c$, then one gets a contradiction.
\end{proof}

\begin{proof}[Proof of Theorem 2]
Let $\cdots \rightarrow F^{-1} \rightarrow F^0 \rightarrow I(X) \rightarrow 0$ 
be a minimal free resolution of the homogeneous ideal $I(X) := 
\text{H}^0_{\ast}({\mathcal I}_{X,{\mathbb P}^n})$ of $S$. One has 
$F^0 \simeq \bigoplus_{i\in {\mathbb Z}}S(-i)^{{\mu}_i}$, where ${\mu}_i$  
is the number of homogeneous minimal generators of degree $i$ of $I(X)$. 
Let $r_i := \text{rk}\, F^{-i}$, $i \geq 0$. We want to show that $r_0 = c$. 

Now, it follows from the hypothesis that 
$\text{H}^i({\mathcal I}_{X,{\mathbb P}^n}(-t)) = 0$ for $t >> 0$, $i = 0, 
\ldots n-c$, and that $\text{H}^i({\mathcal I}_{Y,{\mathbb P}^{n+m}}(-t)) = 0$ 
for $t >> 0$, $i = 0, \ldots , n+m-c$. We may assume that $n-c \geq 1$. 
Let $\cdots \rightarrow G^{-1} \rightarrow G^0 \rightarrow I(Y) \rightarrow 0$ 
be a minimal free resolution of the homogeneous ideal $I(Y)$ of $R$. From 
Lemma 4 and Lemma 5 one deduces that if 
$m > \sum_{i>j}{\mu}_i\text{h}^1({\mathcal I}_{X,{\mathbb P}^n}(j))$ then 
$G^{\bullet}\otimes_RS \simeq F^{\bullet}$. 

The sheafified morphism ${\widetilde G}^{-1} \rightarrow {\widetilde G}^0$ 
has constant corank $c$ along $Y$. If $r_0 > c$ then the $(r_0-c)\times 
(r_0-c)$ minors of the matrix defining this morphism vanish on a closed 
subscheme of ${\mathbb P}^{n+m}$ of codimension 
$\leq (r_1-(r_0-c)+1)(r_0-(r_0-c)+1) = (r_1-r_0+c+1)(c+1)$. If 
$\text{dim}\, Y \geq (r_1-r_0+c+1)(c+1)$, i.e., if 
$m \geq (r_1-r_0+c+1)(c+1) -n +c$, then one gets a contradiction.
\end{proof}

Using Lemma 4 and Lemma 5 one can also prove the following result, that 
answers a question the hypothesis of Theorem 1 might raise. 

\begin{theorem}
Let $X$ be a locally Cohen-Macaulay closed subscheme of ${\mathbb P}^n$, 
of pure codimension $c \geq 2$. If, for every $m > 0$, $X$ extends to a 
locally Cohen-Macaulay closed subscheme of ${\mathbb P}^{n+m}$, of pure 
codimension $c$, then $X$ is arithmetically Cohen-Macaulay. 
\end{theorem}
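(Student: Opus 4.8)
The plan is to prove that the homogeneous coordinate ring $S/I(X)$ is Cohen--Macaulay, where, as in the proof of Theorem 2, $I(X) := \text{H}^0_\ast({\mathcal I}_{X,{\mathbb P}^n})$. Since $X$ has pure codimension $c$ one has $\text{grade}\, I(X) = c$, hence $\text{pd}_S(S/I(X)) \geq c$; equality holds---which is what we want---if and only if the minimal graded free resolution $\cdots \rightarrow F^{-1} \rightarrow F^0 \rightarrow I(X) \rightarrow 0$ has length $c-1$, i.e., if and only if the $(c-1)$-st syzygy module $N := \text{coker}(F^{-c} \rightarrow F^{-c+1})$ is a free $S$-module. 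We may assume $n - c \geq 1$, since a $0$-dimensional closed subscheme is automatically arithmetically Cohen--Macaulay; in particular $n \geq 3$.

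First I would repeat, for the sheaves ${\mathcal F} = {\mathcal I}_{X,{\mathbb P}^n}$ and ${\mathcal G} = {\mathcal I}_{Y,{\mathbb P}^{n+m}}$, the first part of the proof of Theorem 2: the cohomological hypotheses of Lemma 4 and Lemma 5 are satisfied because $X$ and $Y$ are locally Cohen--Macaulay of pure codimension $c$ (so, e.g., $\text{H}^i({\mathcal O}_Y(-t)) = 0$ for $t \gg 0$ and $i < n+m-c$, whence $\text{H}^i({\mathcal I}_{Y,{\mathbb P}^{n+m}}(-t)) = 0$ for $t \gg 0$ and $1 \leq i \leq m$, and similarly for ${\mathcal I}_{X,{\mathbb P}^n}$). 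Consequently, if $m > \sum_{i>j}{\mu}_i\, \text{h}^1({\mathcal I}_{X,{\mathbb P}^n}(j))$ then $G^{\bullet} \otimes_R S \simeq F^{\bullet}$, where $G^{\bullet}$ is a minimal graded free resolution of $I(Y) := \text{H}^0_\ast({\mathcal I}_{Y,{\mathbb P}^{n+m}})$; in particular $N \simeq N_Y \otimes_R S$ with $N_Y := \text{coker}(G^{-c} \rightarrow G^{-c+1})$.

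The crucial point is that the sheaf ${\mathcal N} := \widetilde{N}$ on ${\mathbb P}^n$ is a \emph{vector bundle}. Indeed, for every point $x$ of ${\mathbb P}^n$ the stalk ${\mathcal O}_{X,x}$ is either zero or a Cohen--Macaulay local ring of dimension $\dim {\mathcal O}_{{\mathbb P}^n,x} - c$ (here the pure-codimension hypothesis is used), so, by the Auslander--Buchsbaum formula, $\text{pd}_{{\mathcal O}_{{\mathbb P}^n,x}}{\mathcal I}_{X,x} \leq c-1$; and since the $(c-1)$-st syzygy in \emph{any} free resolution of a module of projective dimension $\leq c-1$ over a Noetherian local ring is free, the coherent sheaf ${\mathcal N}$---whose stalk at $x$ is such a syzygy of ${\mathcal I}_{X,x}$---is locally free. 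The same argument shows that ${\mathcal N}_Y := \widetilde{N_Y}$ is a vector bundle on ${\mathbb P}^{n+m}$; since ${\mathcal N}_Y$ is locally free, the isomorphism $N \simeq N_Y \otimes_R S$ gives ${\mathcal N}_Y \, \vert \, {\mathbb P}^n \simeq {\mathcal N}$ and ${\mathcal Tor}_i^{{\mathcal O}_{{\mathbb P}^{n+m}}}({\mathcal N}_Y,{\mathcal O}_L) = 0$ for $i > 0$, i.e., ${\mathcal N}$ extends to the vector bundle ${\mathcal N}_Y$. As this holds for arbitrarily large $m$---hence, by restricting vector bundles to linear subspaces, for \emph{every} $m > 0$---Theorem 1, applied with ${\mathbb P}^n$ in the role of $X$ and with $c = 0$, yields ${\mathcal N} \simeq \bigoplus_i {\mathcal O}_{{\mathbb P}^n}(a_i)$.

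It remains to pass from the sheaf back to the module. Since $c \geq 2$, the module $N$ is at least a second syzygy of $S/I(X)$, so $\text{depth}_S N \geq 2$ and therefore the natural map $N \rightarrow \text{H}^0_\ast({\mathcal N})$ is an isomorphism; hence $N \simeq \text{H}^0_\ast(\bigoplus_i {\mathcal O}_{{\mathbb P}^n}(a_i)) = \bigoplus_i S(a_i)$ is free, so $\text{pd}_S(S/I(X)) = c$ and $X$ is arithmetically Cohen--Macaulay. The main obstacle is the identification of ${\mathcal N}$ as a vector bundle carrying a Babylonian tower of extensions (this is where the locally Cohen--Macaulay, pure-codimension hypothesis and Auslander--Buchsbaum intervene), together with the depth estimate that turns the conclusion of Theorem 1 about the sheaf ${\mathcal N}$ into the sought statement about the module $N$.
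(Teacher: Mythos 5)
Your proposal is correct and takes essentially the same route as the paper: form the $(c-1)$-st syzygy of $I(X)$, note (via Auslander--Buchsbaum and local Cohen--Macaulayness) that its sheafification is a vector bundle which, by the resolution comparison $G^{\bullet}\otimes_RS \simeq F^{\bullet}$ from Lemmas 4 and 5, extends to arbitrarily high-dimensional projective spaces, and then apply the Babylonian tower theorem for bundles on projective space (Theorem 1 with $c=0$). The only cosmetic difference is the endgame: you deduce freeness of $N$ from $\operatorname{depth}_S N \geq 2$ and $N \simeq \mathrm{H}^0_{\ast}(\mathcal N)$, whereas the paper concludes via the equivalent vanishing $\mathrm{H}^i_{\ast}({\mathcal I}_{X,{\mathbb P}^n})=0$ for $i = 1,\ldots,n-c$; both yield that $X$ is arithmetically Cohen--Macaulay.
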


\begin{proof}
Consider, as in the above proof of Theorem 2, minimal free resolutions 
$\cdots \rightarrow F^{-1} \rightarrow F^0 \rightarrow I(X) \rightarrow 0$ and 
$\cdots \rightarrow G^{-1} \rightarrow G^0 \rightarrow I(Y) \rightarrow 0$. 
If $m$ is sufficiently large then $G^{\bullet}\otimes_RS \simeq F^{\bullet}$. 
Put $F^{-1} := S$ and $G^{-1} := R$. One deduces that the vector bundle 
$E := \text{Ker}({\widetilde F}^{-c+2} \rightarrow {\widetilde F}^{-c+3})$ on 
${\mathbb P}^n$ extends to the vector bundle 
$E^{(m)} := \text{Ker}({\widetilde G}^{-c+2} \rightarrow {\widetilde G}^{-c+3})$ 
on ${\mathbb P}^{n+m}$. By the Babylonian tower theorem for vector bundles on 
projective spaces of Barth and Van de Ven (1974), E. Sato (1977), (1978) and 
Tyurin (1976) (which is, of course, a particular case of Theorem 1) $E$ is a 
direct sum of line bundles on ${\mathbb P}^n$. It follows that 
$\text{H}^i_{\ast}({\mathcal I}_{X,{\mathbb P}^n}) = 0$, 
$i = 1, \ldots , n-c = \text{dim}\, X$, hence $X$ is arithmetically 
Cohen-Macaulay.
\end{proof}     

\section*{References}

\noindent
Barth, W. (1975). Submanifolds of low codimension in projective space. 
\emph{Proc. Int. Congr. Math.} Vancouver 1974. Vol 1:409--413.

\noindent
Barth, W., Van de Ven, A. (1974). A decomposability criterion for algebraic 
2-bundles on projective spaces. \emph{Invent. Math.} 25:91--106.

\noindent
Coand\u{a}, I. (2010). Infinitely stably extendable vector bundles on 
projective spaces. \emph{Arch. Math.} 94:539--545. 

\noindent
Coand\u{a}, I., Trautmann, G. (2006). The splitting criterion of Kempf and 
the Babylonian tower theorem. \emph{Comm. Algebra} 34:2485--2488. 

\noindent
Flenner, H. (1985). Babylonian tower theorems on the punctured spectrum. 
\emph{Math. Ann.} 271:153--160.

\noindent
Matsumura, H. (1986). \emph{Commutative Ring Theory}. Cambridge Studies in 
Advanced Mathematics 8: Cambridge University Press: Cambridge.

\noindent
Sato, E. (1977). On the decomposability of infinitely extendable vector 
bundles on projective spaces and Grassmann varieties. 
\emph{J. Math. Kyoto Univ.} 17:127--150. 

\noindent 
Sato, E. (1978). The decomposability of an infinitely extendable vector 
bundle on the projective space, II. In: \emph{International Symposium 
on Algebraic Geometry}. Kyoto University. Kinokuniya Book Store: Tokyo, 
pp. 663--672. 

\noindent
Tyurin, A.N. (1976). Finite dimensional vector bundles over infinite 
varieties. \emph{Math. USSR Izv.} 10:1187--1204.


\end{document}